\newtheorem{thm}{Theorem}
\newtheorem{lem}[thm]{Lemma}
\newtheorem{cor}[thm]{Corollary}
\newtheorem{rem}[thm]{Remark}
\begin{document}

\title {\bf Gaussian Integral Means of Entire Functions}
\thanks{This work was in part supported by NSERC of Canada and completed during the first-named author's visit (2012.9-12) to Memorial University.}

\author{Chunjie Wang}
\address{Chunjie Wang, Department of Mathematics, Hebei University of Technology,
Tianjin 300401, China} \email{wcj@hebut.edu.cn}

\author{Jie Xiao}
\address{Jie Xiao, Department of Mathematics and Statistics, Memorial University
of Newfoundland, St. John's, NL A1C 5S7, Canada}
\email{jxiao@mun.ca}

\begin{abstract} For an entire function $f:\mathbb C\mapsto\mathbb C$ and a triple $(p,\alpha, r)\in (0,\infty)\times(-\infty,\infty)\times(0,\infty]$, the Gaussian integral means of $f$ (with respect to the area measure $dA$) is defined by
$$
{\mathsf M}_{p,\alpha}(f,r)=\Big(
{\int_{|z|<r}e^{-\alpha|z|^2}dA(z)}\Big)^{-1}{\int_{|z|<r}|f(z)|^p{e^{-\alpha|z|^2}}dA(z)}.
$$
Via deriving a maximum principle for ${\mathsf M}_{p,\alpha}(f,r)$, we establish not only Fock-Sobolev trace inequalities associated with ${\mathsf M}_{p,p/2}(z^m f(z),\infty)$ (as $m=0,1,2,...$), but also convexities of $r\mapsto\ln {\mathsf M}_{p,\alpha}(z^m,r)$ and $r\mapsto {\mathsf M}_{2,\alpha<0}(f,r)$ in $\ln r$ with $0<r<\infty$.
\end{abstract}
\keywords{Maximum principle, trace inequality, logarithmic convexity, Fock-Sobolev
space.}

\subjclass[2000]{Primary 30C80, 30H20, 52A38, 53C43}

\maketitle

\section{Introduction}\label{s1}

Let $dA$ be the Euclidean area measure on the finite complex plane $\mathbb{C}$.
Suppose $\alpha$ is real and $0<p<\infty$. For any entire function $f:\mathbb C\mapsto \mathbb C$,
we consider its Gaussian integral means
$$
{\mathsf M}_{p,\alpha}(f,r)=\frac{\int_{|z|<r}|f(z)|^p{e^{-\alpha|z|^2}}dA(z)}
{\int_{|z|<r}e^{-\alpha|z|^2}dA(z)}\quad\forall\quad r\in (0,\infty).
$$
Upon writing
$$
\begin{cases}
M(r)=\int_0^{2\pi}|f(re^{i\theta})|^pd\theta;\\
v(r)=r e^{-\alpha r^2};\\
i=\sqrt{-1} - \hbox{the\ imaginary\ unit},
\end{cases}
$$ 
we get
$$
\frac{d}{dr}{\mathsf M}_{p,\alpha}(f,r)=\frac{v(r)\int_0^r\big(M(r)-M(s)\big)v(s)ds}{2\pi\left(\int_0^r v(s)ds\right)^2}\ge 0,
$$
and hence the function $r\mapsto \mathsf{M}_{p,\alpha}(f,r)$ is strictly increasing on $(0,\infty)$ unless $f$ is constant. Consequently, 
letting $r\to 0$ and $r\to\infty$ in ${\mathsf M}_{p,\alpha}(f,r)$ respectively, we find the following maximum principle for $r\in (0,\infty)$: 
\begin{align*}
|f(0)|^p&={\mathsf M}_{p,\alpha}(f,0)\le {\mathsf M}_{p,\alpha}(f,r)\\
&\le {\mathsf M}_{p,\alpha}(f,\infty)=\frac{\int_{\mathbb C}|f(z)|^pe^{-\alpha|z|^2}dA(z)}
{\int_{\mathbb C}e^{-\alpha|z|^2}dA(z)}
\end{align*}
with equality if and only if $f$ is a constant. 

Besides the above maximum principle we are here motived mainly by \cite{XZ, L, L1, X1, WZ, XX, CZ} to take a further look at the Gaussian integral means $\mathsf{M}_{p,\alpha}(f,r)$ from two perspectives. The first is to treat the last inequality as a space embedding: if $d\mu_r(z)=1_{|z|<r}\,dA(z)$ (with $1_E$ being the characteristic function of $E\subset\mathbb C$) then 
$$
\int_{\mathbb C}|f(z)e^{-\frac{|z|^2}{2}}|^p\,d\mu_r(z)\le\left({\int_{|z|<r}e^{-\frac{p|z|^2}{2}}\,dA(z)}\right)\mathsf{M}_{p,p/2}(f,\infty).
$$
Such an interpretation leads to characterizing a given nonnegative Borel measure $\mu$ on $\mathbb C$ such that the following Fock-Sobolev trace inequality
\begin{align*}
&\|f\|_{L^q(\mathbb C,\mu)}\\
&\equiv\left(\int_{\mathbb C}|f(z)e^{-\frac{|z|^2}{2}}|^q\,d\mu(z)\right)^\frac1q\\
&\lesssim\Big(\mathsf{M}_{p,p/2}(z^mf(z),\infty)\Big)^\frac1p\\
&\approx\left(\int_{\mathbb C}|z^mf(z)e^{-\frac{|z|^2}{2}}|^p\,dA(z)\right)^\frac1p\\
&\equiv\|f\|_{\mathcal F^{p,m}}
\end{align*}
holds for all holomorphic functions $f:\mathbb C\mapsto\mathbb C$ in $\mathcal F^{p,m}$. In the above and below:
\medskip

\noindent $\bullet$\quad $0<p,q<\infty$;\\ 
$\bullet$\quad $\mathsf X\lesssim \mathsf Y$ (i.e. $\mathsf Y\gtrsim \mathsf X$)  means that there is a constant $c>0$ such that $\mathsf X\le c\mathsf Y$ - moreover - $\mathsf{X}\approx\mathsf{Y}$ is equivalent to $\mathsf X\lesssim \mathsf Y\lesssim\mathsf{X}$;\\
$\bullet$\quad $m$\quad is nonnegative integer;\\  
$\bullet$\quad $\mathcal F^p=\mathcal F^{p,0}$ and $\mathcal F^{p,m}$ stand for the so-called Fock space and Fock-Sobolev space of order $m\ge 1$ respectively. Interestingly, for an entire function $f:\mathbb C\mapsto\mathbb C$ one has (cf. \cite{CZ}):
$$
f\in \mathcal F^{p,m}\Longleftrightarrow |f(0)|+\cdots+|f^{(m-1)}(0)|+\|f^{(m)}\|_{\mathcal F^{p,0}}<\infty.
$$
$\bullet$\quad $B(a,r)=\{z\in\mathbb C:\ |z-a|<r\}$ is the Euclidean disk centered at $a\in\mathbb C$ with radius $r>0$.
\medskip

\noindent As stated in Theorem \ref{t2a} of Section \ref{s2}, the above-required measure is fully determined by
$$
\begin{cases}
\sup_{a\in\mathbb C}\frac{\mu(B(a,r))}{(1+|a|)^{qm}}<\infty\quad\hbox{as}\quad 0<p\le q<\infty;\\
\int_{\mathbb C}\Big(\frac{\mu(B(a,r))}{(1+|a|)^{qm}}\Big)^\frac{p}{p-q}\,dA(a)<\infty\quad\hbox{as}\quad 0<q<p<\infty.
\end{cases}
$$
As a particularly interesting and natural by-product of this characterization, we can also use the Taylor expansion of an entire function at the origin to get the optimal Gaussian Poincar\'e inequality (see \cite[(1.6)]{Pe} as well as \cite[p. 115]{Lex} and \cite[Theorem 1]{Yau} for the endpoint case corresponding to $f\in \mathcal F^{1,1}$ with $f(0)=0$) 
$$
\int_{\mathbb C}|f(z)e^{-\frac{|z|^2}{2}}|^2\,dA(z)-\pi|f(0)|^2\le
\int_{\mathbb C}|f'(z)e^{-\frac{|z|^2}{2}}|^2\,dA(z)\ \ \forall\ \ f\in\mathcal F^{2,1}
$$
which, plus the foregoing maximum-principle-based estimate (cf. \cite[(1)]{CZ})
$$
|f'(z)|e^{-\frac{|z|^2}{2}}\le(2\pi)^{-1}\int_{\mathbb C}|f'(z)e^{-\frac{|z|^2}{2}}|\,dA(z)\ \ \forall\ \ f\in\mathcal F^{1,1},
$$
derives the following Gaussian isoperimetric-Sobolev inequality $f\in\mathcal F^{1,1}$:
$$
\int_{\mathbb C}|f(z)e^{-\frac{|z|^2}{2}}|^2\,dA(z)-\pi|f(0)|^2\le(2\pi)^{-1}\left(\int_{\mathbb C}|f'(z)e^{-\frac{|z|^2}{2}}|\,dA(z)\right)^2
$$
whose sharp form is 
$$
\int_{\mathbb C}|f(z)e^{-\frac{|z|^2}{2}}|^2\,dA(z)-\pi|f(0)|^2\le(4\pi)^{-1}\left(\int_{\mathbb C}|f'(z)e^{-\frac{|z|^2}{2}}|\,dA(z)\right)^2
$$
since this inequality can be proved valid for the entire functions $f(z)=z^k$ with $k=1,2,3,...$ through a direct computation with the polar coordinate system, the mathematical induction and the inequality for the gamma function $\Gamma(\cdot)$ below:
$$
\frac{\Gamma\big(\frac{k+1}{2}\big)}{\Gamma\big(\frac{k}{2}\big)}\le\sqrt{\frac{k+1}{2}}.
$$

The second is to decide: when $\ln r\mapsto \ln \mathsf{M}_{p,\alpha}(z^k,r)$ is convex for $r\in (0,\infty)$, namely, when the Gaussian Hadamard Three Circle Theorem below
$$
\Big(\ln\frac{r_2}{r_1}\Big)\ln\mathsf{M}_{p,\alpha}(z^k,r)\le\Big(\ln\frac{r_2}{r}\Big)\ln\mathsf{M}_{p,\alpha}(z^k,r_1)+\Big(\ln\frac{r}{r_1}\Big)\ln\mathsf{M}_{p,\alpha}(z^k,r_2)
$$
holds for $0< r_1\le r\le r_2<\infty$. The expected result is presented in Theorem \ref{t3a} of Section \ref{s3}, saying that for a nonnegative integer $k$ and a positive number $p$,
$$
\begin{cases}
\ln r\mapsto\ln\mathsf{M}_{p,\alpha}(z^k,r)\ \hbox{is\ concave\ as}\ r\in(0,\infty)\ \hbox{under}\ 0<\alpha<\infty;\\
\exists\ c\in (0,\infty)\ \ni\ \ln r\mapsto \ln\mathsf{M}_{p,\alpha}(z^k,r)\ \hbox{is\  convex\ and\ concave}\\
 \hbox { as}\ r\in(0,c]\ \hbox{and}\  r\in[c,\infty)\  \hbox{respectively}\ \hbox{under}\ -\infty<\alpha\le 0.
\end{cases}
$$
As a consequence, we have that if $-\infty<\alpha, -p<0$ then the function $\ln r\mapsto \ln \mathsf{M}_{p,\alpha}(z^k,r)$ is convex as $r\in (0,\sqrt{(2+pk)/(-2\alpha)}]$ and hence the function $\ln r\mapsto \ln \mathsf{M}_{2,\alpha}(f,r)$ is convex as $r\in(0,\sqrt{1/(-\alpha)}]$ for any entire function $f:\mathbb C\mapsto\mathbb C$. In other words,
$$
\Big(\ln\frac{r_2}{r_1}\Big)\ln\mathsf{M}_{2,\alpha}(f,r)\le\Big(\ln\frac{r_2}{r}\Big)\ln\mathsf{M}_{2,\alpha}(f,r_1)+\Big(\ln\frac{r}{r_1}\Big)\ln\mathsf{M}_{2,\alpha}(f,r_2)
$$
when $0< r_1\le r\le r_2<\sqrt{1/(-\alpha)}$. However, as proved in Remark \ref{r3a} via considering the entire function $1+z$, the last convexity cannot be extended to $(0,\infty)$.
 
\section{Trace inequalities for Fock-Sobolev spaces}\label{s2} 

We need two lemmas. The first lemma comes from \cite{CZ} and \cite{Z, Z1, JPR, W}.

\begin{lem}\label{l2a} Let $p,\sigma,a,t,\lambda\in (0,\infty)$.

\item{\rm(i)} If $m$ is a nonnegative integer, $p_m(z)$ is the Taylor polynomial
of $e^z$ of order $m-1$ (with the convention that $p_0=0$), and $b>-(mp+ 2)$, then 
$$\int_{\mathbb{C}}|e^{z\overline{w}}-p_m(z\overline{w})|^pe^{-a|w|^2}|w|^b dA(w)\lesssim |z|^be^{\frac{p^2}{4a}|z|^2}\quad\forall\quad |z|\ge\sigma.
$$
Furthermore, this last inequality holds also for all $z\in\mathbb C$ when $b \le pm$.

\item{\rm(ii)} If $f:\mathbb C\mapsto\mathbb C$ is an entire function, then
$$
\left|f(z)e^{-\frac{\lambda}{2}|z|^2}\right|^p \lesssim\int_{B(z,t)}\left|f(w)e^{-\frac{\lambda}{2}|w|^2}\right|^p dA(w)\quad\forall\quad z \in \mathbb{C}.
$$

\item{\rm(iii)} There exists a positive constant $r_0$ such that for any
$0<r<r_0$, the Fock space $\mathcal F^{p}$ exactly consists of all functions
$f=\sum_{w\in r\mathbb{Z}^2}c_wk_w$, where 
$$
\begin{cases}
k_w(z)=\exp({z\bar{w}-|w|^2/2});\\
\{c_w: w\in r\mathbb{Z}^2\}\in l^p;\\
\|\{c_w\}\|_{l^p}=\big(\sum_{w\in r\mathbb Z^2}|c_w|^p\big)^\frac1p;\\
\mathbb Z^2=\{n+im:\ n,m=0,\pm 1,\pm 2,...\};\\
r\mathbb Z^2=\{r(n+im):\ n,m=0,\pm 1,\pm 2,...\}.
\end{cases}
$$
Moreover
$$
\|f\|_{\mathcal F^p}\approx \inf
\|\{c_w\}\|_{l^p}\quad\forall\quad f \in \mathcal F^p,
$$
where the infimum is taken over all sequences $\{c_w\}$ giving rise to
the above decomposition.
\end{lem}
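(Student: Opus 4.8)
I would treat the three parts in the order (i), (ii), (iii), noting that (ii) is used inside (iii). \emph{Part (i).} Start from the elementary tail estimate $|e^{\zeta}-p_m(\zeta)|\le|\zeta|^{m}e^{|\zeta|}/m!$, apply it with $\zeta=z\bar w$, pass to polar coordinates $w=\rho e^{i\psi}$, and split the $\rho$-integral at $\rho=1/|z|$. On the inner range $|z|\rho\le1$ one has $|e^{z\bar w}-p_m(z\bar w)|^{p}\lesssim(|z|\rho)^{mp}$, so that contribution is $\lesssim|z|^{mp}\int_{0}^{1/|z|}\rho^{mp+b+1}\,d\rho\approx|z|^{-b-2}$; the hypothesis $b>-(mp+2)$ is exactly what makes this converge, and $|z|^{-b-2}\lesssim|z|^{b}e^{\frac{p^{2}}{4a}|z|^{2}}$ for $|z|\ge\sigma$. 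On the outer range $|z|\rho>1$ write $|e^{z\bar w}-p_m(z\bar w)|\le e^{\mathrm{Re}(z\bar w)}+|p_m(z\bar w)|\lesssim e^{\mathrm{Re}(z\bar w)}+(1+|z|\rho)^{m}$; the only substantive term, $\int_{|z|\rho>1}e^{p\,\mathrm{Re}(z\bar w)}e^{-a|w|^{2}}|w|^{b}\,dA(w)$, is handled by completing the square in the complex variable $w$, using $p\,\mathrm{Re}(z\bar w)-a|w|^{2}=\frac{p^{2}}{4a}|z|^{2}-a\bigl|w-\tfrac{p}{2a}z\bigr|^{2}$; this pulls out $e^{\frac{p^{2}}{4a}|z|^{2}}$ and leaves a Gaussian concentrated at $w\approx\frac{p}{2a}z$, where $|w|^{b}\approx|z|^{b}$, while the residual polynomial term contributes only a power of $|z|$ and is absorbed for $|z|\ge\sigma$. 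For the supplementary claim (validity for all $z\in\mathbb C$ when $b\le pm$) it suffices to handle a neighbourhood of the origin: there the left-hand side is finite (since $b>-2$, which follows from $b\le pm$ via the inner monomial bound) and continuous in $z$, while for $m\ge1$ one has $e^{z\bar w}-p_m(z\bar w)=O(|z|^{m})$ as $z\to0$.

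\emph{Part (ii).} Since $f$ is entire, $|f|^{p}$ is subharmonic and hence obeys the sub-mean-value inequality over $B(z,t)$; the obstruction is that $e^{-\frac{p\lambda}{2}|w|^{2}}$ and $e^{-\frac{p\lambda}{2}|z|^{2}}$ are not comparable over that disk for large $|z|$. I would bypass this by replacing $f$ with the entire function $h(w)=f(w)e^{-\lambda\bar z w}$ (entire in $w$, with $\bar z$ frozen) and using $|h(w)|=|f(w)|e^{-\frac{\lambda}{2}|w|^{2}}\exp\bigl(\tfrac{\lambda}{2}|w-z|^{2}-\tfrac{\lambda}{2}|z|^{2}\bigr)$. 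Applying the sub-mean-value inequality to $|h|^{p}$ over $B(z,t)$, specialising $w=z$ (where the exponential factor equals $e^{-\frac{p\lambda}{2}|z|^{2}}$), and bounding $\exp\bigl(\tfrac{p\lambda}{2}|w-z|^{2}\bigr)\le e^{\frac{p\lambda}{2}t^{2}}$ on the disk, the common factor $e^{-\frac{p\lambda}{2}|z|^{2}}$ cancels and the claim follows with constant depending only on $p,\lambda,t$.

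\emph{Part (iii).} This is the atomic decomposition of $\mathcal F^{p}$, for which I would run the standard argument (as in the cited references). Fix $r$ small so that the side-$r$ squares centred at the points of $r\mathbb Z^{2}$ tile $\mathbb C$ and part (ii) applies on each disk $B(w,\sqrt{2}\,r)$. Boundedness of synthesis, $\|\sum_{w}c_{w}k_{w}\|_{\mathcal F^{p}}\lesssim\|\{c_{w}\}\|_{l^{p}}$, follows from $|k_{w}(z)|e^{-|z|^{2}/2}=e^{-\frac12|z-w|^{2}}$ together with $|\sum_{w}c_{w}k_{w}|^{p}\le\sum_{w}|c_{w}|^{p}|k_{w}|^{p}$ when $0<p\le1$, and with H\"older's inequality plus the uniform lattice bound $\sum_{w\in r\mathbb Z^{2}}e^{-c|z-w|^{2}}\lesssim1$ (any $c>0$) when $p>1$. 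Conversely, put $Tf=\sum_{w}\tilde c_{w}k_{w}$ with $\tilde c_{w}\propto r^{2}f(w)e^{-|w|^{2}/2}$ and show $\|f-Tf\|_{\mathcal F^{p}}\le C(r)\|f\|_{\mathcal F^{p}}$ with $C(r)\to0$ as $r\to0$, by controlling the oscillation of $z\mapsto f(z)e^{-|z|^{2}/2}$ over each square through a Cauchy/gradient estimate and part (ii). For $r$ small, $T$ is then invertible on $\mathcal F^{p}$, so $f=\sum_{w}c_{w}k_{w}$ with $c_{w}\propto r^{2}(T^{-1}f)(w)e^{-|w|^{2}/2}$ and $\|\{c_{w}\}\|_{l^{p}}\lesssim\|T^{-1}f\|_{\mathcal F^{p}}\lesssim\|f\|_{\mathcal F^{p}}$, using part (ii) and the finite overlap of the disks; finally $\|f\|_{\mathcal F^{p}}\approx\inf\|\{c_{w}\}\|_{l^{p}}$ results by pairing synthesis boundedness with this particular decomposition.

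The two genuinely delicate points are, in (i), extracting the \emph{exact} exponent $|z|^{b}$ rather than a larger power of $|z|$ — precisely the content of the square-completion in $w$, equivalently the angular gain $\int_{0}^{2\pi}e^{R\cos\psi}\,d\psi\sim e^{R}\sqrt{2\pi/R}$ in polar form — and, in (iii), the perturbation estimate $\|f-Tf\|_{\mathcal F^{p}}\le C(r)\|f\|_{\mathcal F^{p}}$ with $C(r)\to0$; both are classical and are available in the works to which the lemma is attributed.
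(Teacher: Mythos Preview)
The paper does not prove this lemma; it merely cites \cite{CZ} and \cite{Z, Z1, JPR, W} and moves on, so there is no in-paper argument to compare against. Your outline is, however, exactly the standard route taken in those references: square-completion in the Gaussian for (i), the auxiliary entire function $h(w)=f(w)e^{-\lambda\bar z w}$ together with subharmonicity of $|h|^p$ for (ii), and the Coifman--Rochberg perturbation scheme (synthesis bounded, sampling operator close to the identity for small $r$) for (iii).

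Two small imprecisions are worth flagging. In (i), when you handle the polynomial piece $|p_m(z\bar w)|^p$ on the outer range you must keep the lower cutoff $|w|>1/|z|$: for $-(mp+2)<b\le -2$ the integral $\int_{\mathbb C}|w|^{(m-1)p+b}e^{-a|w|^2}\,dA(w)$ diverges at the origin, and it is only the truncation that rescues the bound (yielding a contribution of order $|z|^{-b-2}$, absorbed as before). Also, your parenthetical ``since $b>-2$, which follows from $b\le pm$'' is garbled; what actually drives the supplementary claim is that the tail estimate gives left-hand side $\lesssim|z|^{mp}$ uniformly for $|z|\le\sigma$, and $|z|^{mp}\lesssim|z|^{b}$ for small $|z|$ precisely when $b\le pm$. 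These are cosmetic; the skeleton of your argument is correct and matches the cited literature.
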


The second lemma is the so-called Khinchine's inequality, which can be found, for example, in \cite{L1}.
\begin{lem}\label{l2b}
Suppose $p\in (0,\infty)$ and $c_j\in\mathbb C$. For the integer part $[t]$ of $t\in (0,\infty)$  let
 $$
 r_0(t)=\left\{\begin{array}{ll}
 1, & 0\le t-[t]<1/2 \\
-1, & 1/2\le t-[t]<1
\end{array}
\right.
$$
and 
$$
r_j(t)=r_0(2^jt)\quad\forall\quad j=1,2,\cdots.
$$ 
Then
$$
\left(\sum_{j=1}^m|c_j|^2\right)^\frac{1}{2}\thickapprox\left(\int_0^1\left|\sum_{j=1}^mc_jr_j(t)\right|^pdt\right)^\frac{1}{p}.
$$
\end{lem}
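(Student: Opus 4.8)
The plan is to reduce the whole statement to a two-sided comparison between the $L^p[0,1]$-norm and the $L^2[0,1]$-norm of a Rademacher sum $X(t)=\sum_{j=1}^m c_jr_j(t)$, exploiting that $r_1,\dots,r_m$ are $\pm1$-valued functions that, for the purpose of integration over the dyadic intervals of $[0,1]$, behave like independent symmetric signs. First I would record the exact $L^2$ identity
$$
\int_0^1\Big|\sum_{j=1}^m c_jr_j(t)\Big|^2\,dt=\sum_{j=1}^m|c_j|^2,
$$
which follows from $\int_0^1 r_j(t)\overline{r_k(t)}\,dt=\delta_{jk}$. This settles $p=2$ outright, and since $[0,1]$ carries a probability measure we get for free the monotonicity $\|X\|_{L^q[0,1]}\le\|X\|_{L^{q'}[0,1]}$ whenever $q\le q'$. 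Hence the ``easy'' half of the claimed equivalence is immediate in each range: $\big(\int_0^1|X|^p\big)^{1/p}\le(\sum|c_j|^2)^{1/2}$ when $0<p\le2$, and $(\sum|c_j|^2)^{1/2}\le\big(\int_0^1|X|^p\big)^{1/p}$ when $p\ge2$. It then remains only to prove the reverse inequality in each of the two ranges, with a constant depending solely on $p$.

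For $p\ge2$ I would use a sub-Gaussian moment bound. Writing $c_j=a_j+ib_j$ with $a_j,b_j$ real and $S(t)=\sum_{j}a_jr_j(t)$, the product structure of integration over dyadic subintervals gives, for every real $\lambda$,
$$
\int_0^1 e^{\lambda S(t)}\,dt=\prod_{j=1}^m\cosh(\lambda a_j)\le\prod_{j=1}^m e^{\lambda^2 a_j^2/2}=\exp\Big(\tfrac{\lambda^2}{2}\sum_{j}a_j^2\Big),
$$
using $\cosh x\le e^{x^2/2}$. A standard optimization (Markov's inequality applied to $e^{\lambda|S|}$, then integrating the resulting tail bound against $p\,s^{p-1}\,ds$) converts this into $\|S\|_{L^p[0,1]}\le C\sqrt p\,\big(\sum_j a_j^2\big)^{1/2}$, uniformly in $m$, and likewise for the imaginary part. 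Since $|X|^2=S^2+\big(\sum_j b_jr_j\big)^2$ and $x\mapsto x^{p/2}$ is convex for $p\ge2$, summing the two estimates yields $\|X\|_{L^p[0,1]}\lesssim_p\big(\sum_j|c_j|^2\big)^{1/2}$.

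For $0<p<2$ I would run the classical Hölder/interpolation trick, which needs only the case $p=4$ of the bound just obtained. Choose $\theta\in(0,1)$ with $\tfrac12=\tfrac{\theta}{p}+\tfrac{1-\theta}{4}$; by log-convexity of $q\mapsto\log\|X\|_{L^q[0,1]}$ one has $\|X\|_{L^2}\le\|X\|_{L^p}^{\theta}\,\|X\|_{L^4}^{1-\theta}$, and since $\|X\|_{L^4}\le C\|X\|_{L^2}$ this gives $\|X\|_{L^2}\le C^{1-\theta}\|X\|_{L^p}^{\theta}\|X\|_{L^2}^{1-\theta}$, hence $\|X\|_{L^2}\le C^{(1-\theta)/\theta}\|X\|_{L^p}$. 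Combined with the easy direction, this is the asserted equivalence for $0<p<2$; no separate complex reduction is needed here since $X$ is handled directly as a complex-valued function. The only points demanding care — and they are bookkeeping rather than genuine obstacles — are passing from the moment generating function bound to an $L^p$ bound uniform in $m$, and tracking the $p$-dependence of the constants; the substantive content is entirely in the exact $L^2$ identity together with Hölder's inequality. (Alternatively, one could bypass the sub-Gaussian estimate and deduce the $p\ge2$ case from the $p=4$ case via the multiplicative structure of Rademacher chaos, but the moment generating function route is the cleanest.)
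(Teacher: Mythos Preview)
Your argument is correct and is essentially the textbook proof of Khinchine's inequality: the exact $L^2$ identity from orthonormality of the Rademacher system, the sub-Gaussian moment generating function bound $\prod_j\cosh(\lambda a_j)\le e^{\lambda^2\sum a_j^2/2}$ to control $\|X\|_{L^p}$ for $p\ge2$, and the H\"older extrapolation against the $p=4$ bound to recover the lower estimate for $0<p<2$. The paper, however, does not give any proof of this lemma at all; it simply names it as Khinchine's inequality and refers the reader to \cite{L1}. So there is no competing argument to compare with --- your proposal supplies what the authors deliberately leave to the literature.

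One cosmetic remark: the phrase ``log-convexity of $q\mapsto\log\|X\|_{L^q}$'' is not quite right (it is $1/q\mapsto\log\|X\|_{L^q}$ that is convex, by Lyapunov's inequality), but the interpolation inequality you actually invoke, $\|X\|_{L^2}\le\|X\|_{L^p}^{\theta}\|X\|_{L^4}^{1-\theta}$ with $\tfrac12=\tfrac{\theta}{p}+\tfrac{1-\theta}{4}$, is stated and used correctly.
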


As the main result of this section, the forthcoming family of analytic-geometric trace inequalities for the Fock-Sobolev spaces is a natural generalization of the so-called diagonal Carleson measures for the Fock-Sobolev spaces in \cite{CZ}.  

\begin{thm}\label{t2a} Let $m$ be a nonnegative integer, $r\in (0,\infty)$, and $\mu$ be a nonnegative Borel measure on $\mathbb{C}$. 

\item{\rm(i)} If $0<p\le q<\infty$, then
$$
\|f\|_{L^q(\mathbb C,\mu)}\lesssim\|f\|_{\mathcal F^{p,m}}\quad\forall\quad f\in \mathcal F^{p,m}
$$
when and only when 
$$
\sup_{a \in\mathbb{C}}\frac{\mu(B(a,r))}{(1+|a|)^{mq}}<\infty.
$$
Equivalently, $a\mapsto\mu(B(a,r))(1+|a|)^{-qm}$ is of class $L^{\infty}(\mathbb C)$.

\item{\rm(ii)} If $0<q<p<\infty$, then
$$
\|f\|_{L^q(\mathbb C,\mu)}\lesssim\|f\|_{p,m}\quad\forall\quad
f\in \mathcal F^{p,m}
$$
when and only when
$$
\sum_{a\in s\mathbb Z^2}\left(\frac{\mu(B(a,r))}{(1+|a|)^{mq}}\right)^\frac{p}{p-q}<\infty\quad\hbox{where}\quad s\in (0,\infty).
$$
Equivalently, $a\mapsto\mu(B(a,r))(1+|a|)^{-qm}$ is of class $L^{p/(p-q)}(\mathbb C)$.
\end{thm}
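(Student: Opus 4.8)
The plan is to run the standard Carleson-measure scheme for the Fock space, now carried along the polynomial weight that the order-$m$ Fock--Sobolev norm builds in; for $m=0$ the statements reduce to the classical Carleson-measure theorem for $\mathcal F^p$ (cf. \cite{Z, Z1, JPR, W}). First I would record the equivalent norm
$$
\|f\|_{\mathcal F^{p,m}}\approx\Big(\int_{\C}\big((1+|z|)^m|f(z)|\,e^{-|z|^2/2}\big)^p\,dA(z)\Big)^{1/p},
$$
which follows from the very definition of $\|\cdot\|_{\mathcal F^{p,m}}$ together with Lemma \ref{l2a}(ii) and the maximum modulus principle applied to $f=(z^mf)/z^m$ near the origin (cf. \cite{CZ}); the content is merely that $(1+|z|)^m$ and $|z|^m$ differ only on $\D$, where holomorphy absorbs the difference. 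Next I would fix a lattice $\Lambda=s\mathbb Z^2$ for which $\{B(w,r):w\in\Lambda\}$ covers $\C$ with absolutely bounded overlap, and note that $(1+|w|)\approx(1+|z|)$ whenever $z\in B(w,r+t)$, so that powers of $(1+|w|)$ move freely in and out of integrals over $B(w,r+t)$.

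For the sufficiency in (i) I would begin from the covering, bound $\int_{\C}|f\,e^{-|\cdot|^2/2}|^q\,d\mu$ by $\sum_{w\in\Lambda}\mu(B(w,r))\,m_w^q$ with $m_w=\sup_{B(w,r)}|f\,e^{-|\cdot|^2/2}|$, insert $\mu(B(w,r))\le C(1+|w|)^{mq}$, and estimate $m_w$ by Lemma \ref{l2a}(ii): raising to the power $p$ and multiplying by $(1+|w|)^{mp}$ bounds $\big((1+|w|)^m m_w\big)^p$ by $\int_{B(w,r+t)}\big((1+|z|)^m|f|\,e^{-|z|^2/2}\big)^p\,dA$. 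Summing over $\Lambda$, using $\ell^1\hookrightarrow\ell^{q/p}$ (valid since $q\ge p$) and the bounded overlap then gives $\|f\|_{L^q(\C,\mu)}^q\lesssim\|f\|_{\mathcal F^{p,m}}^q$. The sufficiency in (ii) proceeds identically, except that from $\sum_w\mu(B(w,r))\,m_w^q$ I would split $\mu(B(w,r))=(1+|w|)^{mq}\cdot\big(\mu(B(w,r))(1+|w|)^{-mq}\big)$ and apply H\"older with the conjugate exponents $p/q$ and $p/(p-q)$; the first factor is $\big(\sum_w((1+|w|)^m m_w)^p\big)^{q/p}\lesssim\|f\|_{\mathcal F^{p,m}}^q$ exactly as above, while the second is precisely the hypothesised $\ell^{p/(p-q)}$ sum.

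For the necessity in (i) I would test against $f_a(z)=(1+|a|)^{-m}k_a(z)$ with $k_a(z)=\exp(z\bar a-|a|^2/2)$: since $k_a^{(m)}=\bar a^{\,m}k_a$ and $\|k_a\|_{\mathcal F^p}\approx1$, the equivalent norm gives $\|f_a\|_{\mathcal F^{p,m}}\approx1$ uniformly in $a$, while $|f_a(z)e^{-|z|^2/2}|=(1+|a|)^{-m}e^{-|z-a|^2/2}\gtrsim(1+|a|)^{-m}$ on $B(a,r)$, so the trace inequality forces $\mu(B(a,r))(1+|a|)^{-mq}\lesssim1$. For the necessity in (ii) I would randomise: picking $\{c_w\}\in\ell^p$ supported on a sub-lattice coarse enough that its disks $B(w,r)$ are pairwise disjoint, set $f_t=\sum_w c_w\,r_w(t)\,(1+|w|)^{-m}k_w$ with the Rademacher functions $r_w$ of Lemma \ref{l2b} re-indexed by the lattice. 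Transplanting the atomic decomposition of Lemma \ref{l2a}(iii) through the equivalent norm (using that $(1+|z-w|)^m e^{-|z-w|^2/2}$ is bounded to absorb the mismatch between $(1+|z|)^m$ and $(1+|w|)^m$) yields $\sup_t\|f_t\|_{\mathcal F^{p,m}}\lesssim\|\{c_w\}\|_{\ell^p}$. Feeding $f_t$ into the trace inequality, integrating over $t\in(0,1)$, applying Khinchine's inequality and then keeping only the diagonal term on each $B(w,r)$ produces
$$
\sum_w|c_w|^q\,\mu(B(w,r))(1+|w|)^{-mq}\lesssim\|\{c_w\}\|_{\ell^p}^q\quad\forall\quad\{c_w\}\in\ell^p,
$$
and the converse H\"older inequality for $\ell^{p/q}$ then delivers $\{\mu(B(w,r))(1+|w|)^{-mq}\}\in\ell^{p/(p-q)}$. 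In both parts I would finish by passing between the discrete conditions over an arbitrary lattice and the stated $L^\infty(\C)$, resp. $L^{p/(p-q)}(\C)$, conditions through the routine comparison of $\mu(B(a,r))$ at neighbouring points.

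I expect the principal difficulty to be organisational rather than conceptual: making the weight $(1+|a|)^{mq}$ travel consistently through the covering estimates, the atomic decomposition and the randomisation, and in particular verifying that the transplant of Lemma \ref{l2a}(iii) to $\mathcal F^{p,m}$ costs only an absolute constant. The one genuinely analytic point I would settle first is the equivalent norm displayed above --- together with the fact that $(1+|z|)^m|f(z)|e^{-|z|^2/2}$ still satisfies a sub-mean value estimate --- both of which follow quickly from Lemma \ref{l2a}(ii) and elementary estimates on $\D$.
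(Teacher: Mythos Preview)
Your proposal is correct and follows the same overall Carleson-measure scheme (lattice covering, sub-mean-value estimate, $\ell^1\hookrightarrow\ell^{q/p}$ or H\"older, Khinchine for the lower bound), but the execution differs from the paper's in one respect worth noting. You front-load the equivalent norm $\|f\|_{\mathcal F^{p,m}}\approx\|(1+|z|)^m f\|_{\mathcal F^p}$ and then test with the plain normalised kernels $(1+|a|)^{-m}k_a$; the paper instead works with the weight $|z|^m$ as in the definition and therefore tests with the modified kernels $(e^{z\bar a}-p_m(z\bar a))/z^m$ of Lemma~\ref{l2a}(i), which are entire precisely because the Taylor polynomial has been subtracted. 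Similarly, in the necessity for (ii) the paper applies the atomic decomposition of Lemma~\ref{l2a}(iii) to $z^m f$ and then divides by $z^m$, whereas you build atoms $(1+|w|)^{-m}k_w$ directly in $\mathcal F^{p,m}$. Your route is tidier --- all test functions are manifestly entire and Lemma~\ref{l2a}(i) is never needed --- but it costs you exactly the two verifications you flag: the equivalent norm and the transplanted half of the atomic decomposition. The paper's route trades those for the more intricate test functions but uses Lemma~\ref{l2a} strictly as a black box. Both are standard; your identification of the ``organisational'' difficulty is accurate, and the estimate $(1+|z|)^m\le(1+|w|)^m(1+|z-w|)^m$ together with the Gaussian decay of $k_w$ is indeed all that is needed to push the transplant through.
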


\begin{proof} (i) Suppose $0<p\le q<\infty$. The following argument is similar to that of Theorem 10 in \cite{CZ}.

 Assume firstly that $\|f\|_{L^q(\mathbb C,\mu)}\lesssim\|f\|_{\mathcal F^{p,m}}$ holds for all
$f\in \mathcal F^{p,m}$. Taking $f=1$ shows that $\mu(K)\lesssim 1$ for any compact set $K\subset\mathbb C$.

Fix any $a\in\mathbb C$ and let
$$
f(z) = (e^{z\overline{a}}-p_m(z\overline{a}))/z^m
$$
in the last assumption. Then Lemma \ref{l2a} (i) implies 
$$\int_{\mathbb{C}}\left|\frac{e^{z\overline{a}}-p_m(z\overline{a})}{z^m}e^{-\frac{1}{2}|z|^2}\right|^q d\mu(z) \lesssim(e^{\frac{p}{2}|a|^2})^\frac{q}{p}=e^{\frac{q}{2}|a|^2}.$$
In particular,
$$
\int_{B(a,r)}\left|\frac{e^{z\overline{a}}-p_m(z\overline{a})}{z^m}e^{-\frac{1}{2}|z|^2}\right|^q d\mu(z) \lesssim e^{\frac{q}{2}|a|^2}.$$
If $|a| > 2r$, then $|z|^m$ is comparable to $(1 + |a|)^m$ for $B(a,r)$. So
$$\int_{B(a,r)}|e^{z\overline{a}}|^q|1-e^{-z\overline{a}}p_m(z\overline{a})|^qe^{-\frac{q}{2}|z|^2} d\mu(z) \lesssim (1+|a|)^{mq}e^{\frac{q}{2}|a|^2}$$
holds for all $|a| > 2r$. Note that
$$
\lim_{|a|\to\infty}\inf_{z\in B(a,r)}|1-e^{-z\overline{a}}p_m(z\overline{a})| = 1.
$$
Thus
$$
\int_{B(a,r)}|e^{z\overline{a}}|^qe^{-\frac{q}{2}|z|^2}d\mu(z) \lesssim (1+|a|)^{mq}e^{\frac{q}{2}|a|^2}$$
holds for the sufficiently large $|a|$. But this last inequality is clearly true for smaller $|a|$ as
well. So we have
$$
\int_{B(a,r)}|e^{z\overline{a}}|^qe^{-\frac{q}{2}|z|^2} d\mu(z) \lesssim (1+|a|)^{mq}e^{\frac{q}{2}|a|^2}\quad\forall\quad a\in\mathbb C.
$$
Completing a square in the exponent, we can rewrite the inequality
above as $$\int_{B(a,r)}e^{-\frac{q}{2}|z-a|^2} d\mu(z) \lesssim (1+|a|)^{mq}$$
thereby deducing
$$\mu(B(a,r)) \lesssim(1+|a|)^{mq}e^{\frac{q}{2}r^2}\quad\forall\quad a\in\mathbb{C}.
$$

Conversely, assume that 
$$
\mu(B(a,r))\lesssim(1+|a|)^{mq}\quad\forall\quad a\in\mathbb{C}.
$$
We proceed to estimate the integral
$$
\|f\|^q_{L^q(\mathbb C,\mu)}=\int_{\mathbb{C}}|f(z)e^{-\frac{1}{2}|z|^2}|^q d\mu(z)
$$
of any given function $f\in \mathcal F^{p,m}$. For any positive number $s$ let $Q_s$ denote the following square in $\mathbb{C}$ with vertices $0, s, si$,
and $s + si$:
$$
Q_s = \{z = x + iy : 0 < x \le s\ \ \&\ \ 0 < y \le s\}.
$$
It is clear that
$$
\mathbb{C}=\cup_{a\in s\mathbb Z^2} (Q_s+a)
$$
is a decomposition of $\mathbb{C}$ into disjoint squares of side length $s$. Thus
$$
\|f\|^q_{L^q(\mathbb C,\mu)}=\sum_{a\in s\mathbb Z^2}\int_{Q_s+a}|f(z)e^{-\frac{1}{2}|z|^2}|^q d\mu(z).
$$

Fix positive numbers $s$ and $t$ such that $t+\sqrt{s} = r$. By Lemma \ref{l2a} (ii)
\begin{eqnarray*}
|f(z)e^{-\frac{1}{2}|z|^2}|^p &\lesssim& \int_{B(z,t)}|f(w)e^{-\frac{1}{2}|w|^2}|^p dA(w)\\ &\lesssim&\frac{1}{(1+|z|)^{mp}}\int_{B(z,t)}|w^mf(w)e^{-\frac{1}{2}|w|^2}|^p dA(w)
\end{eqnarray*}
holds for all $z\in\mathbb{C}$. Now if $z\in Q_s + a$, where
$a\in s\mathbb Z^2$ implies $B(z, t)\subset B(a, r)$ by the triangle inequality, and hence $1+|z|\approx 1+|a|$. Consequently, 
$$
|f(z)e^{-\frac{1}{2}|z|^2}|^p \lesssim\frac{1}{(1+|a|)^{mp}}\int_{B(a,r)}|w^mf(w)e^{-\frac{1}{2}|w|^2}|^p dA(w).
$$ 
This amounts to

$$
|f(z)e^{-\frac{1}{2}|z|^2}|^q \lesssim\frac{1}{(1+|a|)^{mq}}\left(\int_{B(a,r)}|w^mf(w)e^{-\frac{1}{2}|w|^2}|^p dA(w)\right)^{\frac{q}{p}}.
$$
Therefore,
$$
\|f\|^q_{L^q(\mathbb C,\mu)}\lesssim\sum_{a\in s\mathbb Z^2}\frac{\mu(B(a,r))}{(1+|a|)^{mq}}\left(\int_{B(a,r)}|w^mf(w)e^{-\frac{1}{2}|w|^2}|^p dA(w)\right)^{\frac{q}{p}}.
$$
Combining this last estimate with the previous assumption on $\mu$ and $p\le q$, we obtain
\begin{eqnarray*}
\|f\|^q_{L^q(\mathbb C,\mu)} &\lesssim& \sum_{a\in s\mathbb Z^2}\left(\int_{B(a,r)}|w^mf(w)e^{-\frac{1}{2}|w|^2}|^p dA(w)\right)^{\frac{q}{p}}\\
& \lesssim&\left(\sum_{a\in s\mathbb Z^2}\int_{B(a,r)}|w^mf(w)e^{-\frac{1}{2}|w|^2}|^p dA(w)\right)^{\frac{q}{p}}.
\end{eqnarray*}
Note that there exists a positive integer $N$ such that each point in $\mathbb C$ belongs to at most $N$ of the disks $B(a,r)$, where $a\in s\mathbb Z^2$. So, one gets
$$
\|f\|^q_{L^q(\mathbb C,\mu)}\lesssim\left(\int_{\mathbb{C}}|w^mf(w)e^{-\frac{1}{2}|w|^2}|^p dA(w)\right)^{\frac{q}{p}}\approx \|f\|_{\mathcal F^{p,m}}^q,
$$
as desired.

(ii) Suppose $0<q<p<\infty$. The following proof is inspired by \cite{X1}. 

First assume that $\|f\|_{L^q(\mathbb C,\mu)}\lesssim \|f\|_{\mathcal F^{p,m}}$ holds for all
$f\in \mathcal F^{p,m}$. For any $\{c_j\}\in l^p$, we may choose $\{r_j(t)\}$ as in Lemma \ref{l2b}, thereby getting 
$$
\{c_jr_j(t)\}\in l^p\quad\&\quad \|\{c_jr_j(t)\}\|_{l^p}=\|\{c_j\}\|_{l^p}.
$$ 
Then by Lemma \ref{l2a} (iii) we know that 
$$
\sum_{j=1}^\infty c_jr_j(t)k_{a_j}(z)\equiv z^mf(z)
$$ 
is in $\mathcal F^p$ with norm
$\|f\|_{\mathcal F^{p,m}}\approx\inf\|\{c_j\}\|_{l^p}$. Here $\{a_j\}$ is the sequence of all complex numbers of $s\mathbb Z^2$ and $k_a(z)=e^{z\overline{a}-\frac{1}{2}|a|^2}$.
In particular, 
$$
f(z)=\sum_{j=1}^\infty c_jr_j(t){k_{a_j}(z)}z^{-m}.
$$
According to the assumption we have
$$
\int_{\mathbb C}\Big|\sum_{j=1}^\infty c_jr_j(t)\frac{k_{a_j}(z)}{z^m e^{|z|^2/2}}\Big|^qd\mu(z)=
\|f\|^q_{L^q(\mathbb C,\mu)}\lesssim\|f\|_{\mathcal F^{p,m}}^q,
$$
whence getting by Lemma \ref{l2b},
$$
\int_\mathbb{C}\Big(\sum_{j=1}^\infty {|c_jk_{a_j}(z)e^{-\frac{1}{2}|z|^2}|^2}{|z|^{-2m}}\Big)^\frac{q}{2}d\mu(z)\lesssim\|f\|_{\mathcal F^{p,m}}^q.
$$
Also, note that if $|a|> 2r$ then $|z|^m$ is comparable to $(1+|a|)^m$ for $z\in B(a,r)$. So
\begin{eqnarray*}
&&\int_\mathbb{C}\Big(\sum_{j=1}^\infty {|c_jk_{a_j}(z)e^{-\frac{1}{2}|z|^2}|^2}{|z|^{-2m}}\Big)^\frac{q}{2}d\mu(z)\\
&&=\sum_{l=1}^\infty\int_{Q_s+a_l}\Big(\sum_{j=1}^\infty {|c_je^{-\frac{1}{2}|z-a_j|^2}|^2}{|z|^{-2m}}\Big)^\frac{q}{2}d\mu(z)\\
&&\ge\sum_{l=1}^\infty\int_{Q_s+a_l}{|c_l|^q}{|z|^{-mq}}e^{-\frac{q}{2}|z-a_l|^2} d\mu(z)\\
&&\gtrsim\sum_{j=1}^\infty\int_{B(a_j,r)}{|c_j|^q}{|z|^{-mq}}e^{-\frac{q}{2}|z-a_j|^2} d\mu(z)\\
&&\gtrsim\sum_{j=1}^\infty |c_j|^q\frac{\mu(B(a_j,r))}{(1+|a_j|)^{mq}}.
\end{eqnarray*}
So, a combination of the previously-established inequalities gives
$$
\sum_{j=1}^\infty |c_j|^q\frac{\mu(B(a_j,r))}{(1+|a_j|)^{mq}}\lesssim\|\{|c_j|\}\|^q_{l^p}=\|\{|c_j|^q\}\|_{l^{p/q}}.
$$
Since $p/(p-q)$ is the conjugate number of $p/q$, an application of the Riesz representation theorem yields 
$$
\left\{\frac{\mu(B(a_j,r))}{(1+|a_j|)^{mq}}\right\}\in l^{\frac{p}{p-q}}.
$$

Conversely, assume that the last statement holds. Note that the first part of the argument for the above (i) tells that
$$
\|f\|^q_{L^q(\mathbb C,\mu)}\lesssim\sum_{a\in s\mathbb Z^2}\frac{\mu(B(a,r))}{(1+|a|)^{mq}}\left(\int_{B(a,r)}|w^mf(w)e^{-\frac{1}{2}|w|^2}|^p dA(w)\right)^{\frac{q}{p}}
$$
holds for all $f\in \mathcal F^{p,m}$. Applying H\"{o}lder's inequality to the last summation we obtain
\begin{eqnarray*}
\|f\|^q_{L^q(\mathbb C,\mu)} & \lesssim& \left(\sum_{a\in s\mathbb Z^2}\left(\frac{\mu(B(a,r))}{(1+|a|)^{mq}}\right)^{\frac{p}{p-q}}\right)^{\frac{p-q}{p}}\\
&&\quad\times \left(\sum_{a\in s\mathbb Z^2}\int_{B(a,r)}|w^mf(w)e^{-\frac{1}{2}|w|^2}|^p dA(w)\right)^{\frac{q}{p}}.
\end{eqnarray*}
Once again, notice that there exists a positive integer $N$ such that each point in $\mathbb C$ belongs to at most $N$ of the disks $B(a,r)$, where $a\in s\mathbb Z^2$. So, 
$$
\|f\|^q_{L^q(\mathbb C,\mu)}\lesssim\left(\sum_{a\in s\mathbb Z^2}\left(\frac{\mu(B(a,r))}{(1+|a|)^{mq}}\right)^{\frac{p}{p-q}}\right)^{\frac{p-q}{p}}\|f\|_{\mathcal F^{p,m}}^q.
$$
This completes the argument.
\end{proof}

The following extends \cite[Theorem 5]{CMS} (cf. \cite[Theorem 1]{U}), and \cite[Theorem 1]{Co}, respectively.

\begin{cor}\label{c2a} Let $\phi:\mathbb C\mapsto\mathbb C$ be an entire function. For $p\in (0,\infty)$ and a nonnegative integer $m$ define two linear operators acting on an entire function $f:\mathbb C\mapsto\mathbb C$:
$$
\begin{cases}
C_\phi f(z)=f\circ\phi(z)\quad\forall\quad z\in\mathbb C;\\
T_\phi f(z)=\int_0^z f(w)\phi'(w)\,dw\quad\forall\quad z\in\mathbb C.
\end{cases}
$$

\item{\rm(i)} The composition operator $C_\phi: \mathcal F^{p,m}\mapsto \mathcal F^{q}$ exists as a bounded operator if and only if
$$
\begin{cases}
\sup_{a\in\mathbb C}\frac{\int_{\phi^{-1}(B(a,r))}e^{-q|z|^2/2}\,dA(z)}{(1+|a|)^{mq}}<\infty\ \ \hbox{when}\ \ 0<p\le q<\infty;\\
\int_{\mathbb C}\Big(\frac{\int_{\phi^{-1}(B(a,r))}e^{-q|z|^2/2}\,dA(z)}{(1+|a|)^{mq}}\Big)^{p/(p-q)}\,dA(a)<\infty\ \ {when}\ \ 0<q<p<\infty.
\end{cases}
$$

\item{\rm(ii)} The Riemann-Stieltjes integral operator $T_\phi: \mathcal F^{p,m}\mapsto \mathcal F^{q}$ exists as a bounded operator if and only if
$$
\begin{cases}
\sup_{a\in\mathbb C}\frac{\int_{B(a,r)}\big(\frac{|\phi'(z)|}{(1+|z|)}\big)^q\,dA(z)}{(1+|a|)^{mq}}<\infty\ \ \hbox{when}\ \ 0<p\le q<\infty;\\
\int_{\mathbb C}\Big(\frac{\int_{B(a,r)}\big(\frac{|\phi'(z)|}{(1+|z|)}\big)^q\,dA(z)}{(1+|a|)^{mq}}\Big)^{p/(p-q)}\,dA(a)<\infty\ \ \hbox{when}\ \ 0<q<p<\infty.
\end{cases}
$$
\end{cor}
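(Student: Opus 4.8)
The plan is to read both equivalences off Theorem~\ref{t2a}, once the norms $\|C_\phi f\|_{\mathcal F^q}$ and $\|T_\phi f\|_{\mathcal F^q}$ have been rewritten as Fock--Sobolev trace functionals $\|f\|_{L^q(\mathbb C,\mu)}$ against explicitly computed Borel measures. For (i), I note that $C_\phi:\mathcal F^{p,m}\to\mathcal F^q$ is bounded exactly when $\|f\circ\phi\|_{\mathcal F^q}\lesssim\|f\|_{\mathcal F^{p,m}}$ for all $f\in\mathcal F^{p,m}$, and that
$$
\|f\circ\phi\|_{\mathcal F^q}^q=\int_{\mathbb C}\big|f(\phi(z))\,e^{-\frac12|z|^2}\big|^q\,dA(z).
$$
Pushing the measure $e^{-\frac q2|z|^2}\,dA(z)$ forward by $\phi$ and carrying the Gaussian factor along, a change of variables turns the right-hand side into $\int_{\mathbb C}|f(w)\,e^{-\frac12|w|^2}|^q\,d\mu_\phi(w)$ with
$$
\mu_\phi(E)=\int_{\phi^{-1}(E)}e^{\frac q2\left(|\phi(z)|^2-|z|^2\right)}\,dA(z),\qquad E\subset\mathbb C\ \hbox{Borel}.
$$
Theorem~\ref{t2a}, applied with $\mu=\mu_\phi$, then identifies boundedness of $C_\phi$ with finiteness of $\sup_a\mu_\phi(B(a,r))(1+|a|)^{-mq}$ when $0<p\le q<\infty$, and with $L^{p/(p-q)}(\mathbb C)$-membership of $a\mapsto\mu_\phi(B(a,r))(1+|a|)^{-mq}$ when $0<q<p<\infty$; since $|\phi(z)|$ differs from $|a|$ by less than $r$ on $\phi^{-1}(B(a,r))$, the factor $e^{\frac q2|\phi(z)|^2}$ there is governed by $e^{\frac q2|a|^2}$, and it then remains to recognise $\mu_\phi(B(a,r))$ in the displayed integrals.

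For (ii), the identities $(T_\phi f)(0)=0$ and $(T_\phi f)'=f\phi'$ combine with the standard Fock-space norm estimate, valid for every entire $g$,
$$
\|g\|_{\mathcal F^q}\approx|g(0)|+\Big(\int_{\mathbb C}\Big|\frac{g'(z)}{1+|z|}\,e^{-\frac12|z|^2}\Big|^q\,dA(z)\Big)^{1/q}
$$
(the factor $1+|z|$ being the loss incurred by differentiating against the Gaussian weight; cf.\ \cite{Co} and its references), to give $\|T_\phi f\|_{\mathcal F^q}\approx\|f\|_{L^q(\mathbb C,\omega_\phi)}$ with $d\omega_\phi(z)=\big(|\phi'(z)|/(1+|z|)\big)^q\,dA(z)$. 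Hence $T_\phi$ is bounded exactly when $\|f\|_{L^q(\mathbb C,\omega_\phi)}\lesssim\|f\|_{\mathcal F^{p,m}}$ (so that $T_\phi$ indeed carries $\mathcal F^{p,m}$ into $\mathcal F^q$), and Theorem~\ref{t2a} with $\mu=\omega_\phi$ yields the two conditions listed, according as $0<p\le q<\infty$ or $0<q<p<\infty$.

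The applications of H\"older's inequality and of the bounded-overlap property of the disks $B(a,r)$ that one would otherwise invoke are already absorbed into Theorem~\ref{t2a}, so nothing new is needed there. The principal obstacle is the change-of-variables step in (i): since an entire $\phi$ is in general far from injective, $e^{-\frac q2|z|^2}\,dA$ has to be pushed forward honestly---summing over the sheets of $\phi$ away from its discrete critical set---after which one must verify that the Carleson functional of $\mu_\phi$ is, uniformly in $a$, comparable to the quantity recorded in the statement. A lesser point is to quote the derivative norm equivalence used in (ii) in exactly the form needed; the rest is bookkeeping.
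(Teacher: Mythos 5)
Your treatment of (ii) coincides with the paper's own argument: $T_\phi f(0)=0$, $(T_\phi f)'=f\phi'$, Constantin's norm equivalence from \cite{Co}, and then Theorem \ref{t2a} applied to $d\mu(z)=\big(|\phi'(z)|/(1+|z|)\big)^q\,dA(z)$. That part is fine and needs no further comment.

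In (i) you diverge from the paper, and your computation is in fact the honest one: the exact identity $\|C_\phi f\|_{\mathcal F^q}^q=\int_{\mathbb C}\big|f(w)e^{-|w|^2/2}\big|^q\,d\mu_\phi(w)$ holds with your $\mu_\phi(E)=\int_{\phi^{-1}(E)}e^{\frac q2(|\phi(z)|^2-|z|^2)}\,dA(z)$, and it requires no sheet-counting whatsoever --- it is just the abstract push-forward identity $\int g\,d(\phi_*\nu)=\int (g\circ\phi)\,d\nu$ --- so the ``principal obstacle'' you single out is not an obstacle. (The paper instead takes $\mu(E)=\int_{\phi^{-1}(E)}e^{-q|z|^2/2}\,dA(z)$ and identifies $\|C_\phi f\|_{\mathcal F^q}$ with $\|f\|_{L^q(\mathbb C,\mu)}$, which silently discards the factor $e^{\frac q2|\phi(z)|^2}$.) The genuine gap is the step you defer at the end: $\mu_\phi(B(a,r))$ cannot be ``recognised'' in the displayed integrals. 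On $\phi^{-1}(B(a,r))$ your density exceeds the displayed one by $e^{\frac q2|\phi(z)|^2}$, which is of size $e^{\frac q2|a|^2}$ --- and even that comparison holds only up to factors of order $e^{qr|a|}$, unbounded in $a$ --- so the two Carleson functionals are not comparable. Concretely, take $\phi(z)=2z$ and $m=0$: the printed condition holds trivially, since $\int_{\phi^{-1}(B(a,r))}e^{-q|z|^2/2}\,dA(z)=\int_{B(a/2,r/2)}e^{-q|z|^2/2}\,dA(z)\le \pi r^2/4$, yet $C_\phi:\mathcal F^{p}\mapsto\mathcal F^{q}$ is unbounded (test $f(z)=e^{cz}$ with $c\to\infty$, for which $\|C_\phi f\|_{\mathcal F^q}/\|f\|_{\mathcal F^p}\approx e^{3c^2/2}$), while your $\mu_\phi(B(a,r))$ grows like $e^{3q|a|^2/8}$ and correctly detects this. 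So your route, once completed, proves a corrected version of (i) in which $e^{-q|z|^2/2}$ is replaced by $e^{\frac q2(|\phi(z)|^2-|z|^2)}$ in the integral over $\phi^{-1}(B(a,r))$, in line with the weighted-composition literature (cf. \cite{CMS}, \cite{U}); it does not prove the corollary as printed, whose sufficiency half fails, and the mismatch you ran into is inherited from the statement and the paper's own two-line proof rather than from your approach. Your proposed fix --- replacing $e^{\frac q2|\phi(z)|^2}$ by $e^{\frac q2|a|^2}$ --- should also be avoided or handled direction-by-direction, for the reason noted above.
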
 

\begin{proof}
(i) For any Borel set $E\subset\mathbb C$ let $\phi^{-1}(E)$ be the pre-image of $E$ under $\phi$ and 
$$
\mu(E)=\int_{\phi^{-1}(E)}\exp\Big(-\frac{q|z|^2}{2}\Big)\,dA(z).
$$
Then 
$$
\|C_\phi f\|^q_{L^q(\mathbb C,\mu)}=\int_{\mathbb C}|f(z)e^{-\frac{|z|^2}{2}}|^q\,d\mu(z)\quad\forall\quad f\in \mathcal F^{p,m}.
$$
An application of Theorem \ref{t2a} with the above formula gives the desired result.

(ii) According to \cite[Proposition 1]{Co}, an entire function $f:\mathbb C\mapsto\mathbb C$ belongs to $\mathcal F^{q}$ if and only if
$$
\int_{\mathbb C}\Big(\frac{|f'(z)|e^{-\frac{|z|^2}{2}}}{1+|z|}\Big)^q\,dA(z)<\infty.
$$
So, $T_\phi f\in \mathcal F^q$ is equivalent to
$$
\int_{\mathbb C}\Big(\frac{|f(z)\phi'(z)|e^{-\frac{|z|^2}{2}}}{1+|z|}\Big)^q\,dA(z)<\infty.
$$
Now, choosing
$$
d\mu(z)=\Big(\frac{|\phi'(z)|}{1+|z|}\Big)^q\,dA(z)
$$
in Theorem \ref{t2a}, we get the boundedness result for $T_\phi$. 
\end{proof}

\section{Convexities or concavities in logarithm}\label{s3}

We also need two lemmas. The first one comes directly from \cite[Lemmas 2, 1, 6]{WZ} with $(0,1)$ being replaced by $(0,\infty)$. 
\begin{lem}\label{l3a}
\item{\rm(i)} Suppose $f$ is positive and twice differentiable on $(0,\infty)$. Let
$$
D(f(x))\equiv\frac{f'(x)}{f(x)}
+x\frac{f''(x)}{f(x)}-x\left(\frac{f'(x)}{f(x)}\right)^2.$$ Then the function $\ln f(x)$ is
concave in $\ln x$ if and only if $D(f(x))\le0$
on $(0,\infty)$ and $\ln f(x)$ is
convex in $\ln x$ if and only if $D(f(x))\ge0$
on $(0,\infty)$.

\item{\rm(ii)} Suppose $f$ is twice differentiable on $(0,\infty)$. Then $f(x)$ is
convex in $\ln x$ if and only if $f(x^2)$ is convex in $\ln x$ and $\ln f(x)$ is
concave in $\ln x$ if and only if $f(x^2)$ is concave in $\ln x$.

\item{\rm(iii)} Suppose $\{h_k(x)\}$ is a sequence of positive and twice
differentiable functions on $(0,\infty)$ such that the function
$$
H(x)=\sum_{k=0}^\infty h_k(x)
$$
is also twice differentiable on $(0,\infty)$. If for each natural number $k$ the
function $\ln h_k(x)$ is convex in $\ln x$, then $\ln H(x)$ is
also convex in $\ln x$. 
\end{lem}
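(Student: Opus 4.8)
The plan is to reduce every assertion to a one–variable calculus statement after the substitution $t=\ln x$, i.e. $x=e^t$, under which ``a function $\varphi(x)$ is convex (concave) in $\ln x$'' means precisely that $t\mapsto\varphi(e^t)$ is convex (concave) on $(-\infty,\infty)$. For part (i) I would first differentiate $g(t)=\ln f(e^t)$ twice, using $\tfrac{d}{dt}=e^t\tfrac{d}{dx}=x\tfrac{d}{dx}$. This gives $g'(t)=x\,f'(x)/f(x)$ and then
$$
g''(t)=x\frac{d}{dx}\Big(x\frac{f'(x)}{f(x)}\Big)=x\Big(\frac{f'(x)}{f(x)}+x\frac{f''(x)}{f(x)}-x\Big(\frac{f'(x)}{f(x)}\Big)^2\Big)=x\,D(f(x)).
$$
Since $x>0$ on $(0,\infty)$, the sign of $g''$ coincides everywhere with that of $D(f(x))$, and a twice differentiable function of one real variable is convex (resp. concave) on an interval exactly when its second derivative is $\ge 0$ (resp. $\le 0$) there; this yields both stated equivalences. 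The only items needing care are that positivity of $f$ makes $\ln f$ well defined and $C^2$, and that the second–derivative test is applied on the whole of $(0,\infty)$.

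For part (ii) I would note that passing from $x$ to $x^2$ amounts, in the variable $t=\ln x$, to the increasing affine reparametrization $t\mapsto 2t$, because $\ln(x^2)=2\ln x$, together with the identity $\Phi(x^2)=(\Phi\circ\exp)(2t)$ for any function $\Phi$. Convexity and concavity of a real function of one variable are invariant under composition with an increasing affine change of variable, so $t\mapsto(\Phi\circ\exp)(t)$ is convex (resp. concave) if and only if $t\mapsto(\Phi\circ\exp)(2t)$ is. Specializing $\Phi=f$ gives the first equivalence and $\Phi=\ln f$ gives the second (read with $\ln f(x^2)$); nothing beyond the chain rule is required.

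For part (iii) the natural tool is H\"older's inequality. Fix $0<x_1\le x\le x_2<\infty$ and write $x=x_1^{\lambda}x_2^{1-\lambda}$ with $\lambda\in[0,1]$, i.e. $\ln x=\lambda\ln x_1+(1-\lambda)\ln x_2$. Convexity of $\ln h_k$ in $\ln x$ is exactly $h_k(x)\le h_k(x_1)^{\lambda}h_k(x_2)^{1-\lambda}$, so summing in $k$ and applying H\"older with exponents $1/\lambda$ and $1/(1-\lambda)$ gives
$$
H(x)=\sum_{k=0}^\infty h_k(x)\le\sum_{k=0}^\infty h_k(x_1)^{\lambda}h_k(x_2)^{1-\lambda}\le\Big(\sum_{k=0}^\infty h_k(x_1)\Big)^{\lambda}\Big(\sum_{k=0}^\infty h_k(x_2)\Big)^{1-\lambda}=H(x_1)^{\lambda}H(x_2)^{1-\lambda},
$$
which is precisely convexity of $\ln H$ in $\ln x$; by part (i) one may, if desired, also record this as $D(H(x))\ge 0$, using the assumed twice–differentiability of $H$, though that hypothesis is not needed for the inequality itself. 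I expect the genuinely substantive content to be this H\"older step, and the only mild obstacle to be its passage to an infinite series: since all $h_k$ are positive and $H(x)$ is finite, one applies H\"older to partial sums and lets the number of terms tend to infinity (or invokes monotone convergence), after which the bound above follows. Everything else in the lemma is routine bookkeeping once the logarithmic substitution is in place.
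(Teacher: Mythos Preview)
Your argument is correct in all three parts. The paper does not supply its own proof of this lemma but simply cites \cite[Lemmas 2, 1, 6]{WZ} (with the interval $(0,1)$ replaced by $(0,\infty)$), so there is no in-paper proof to compare against; your direct verifications---the chain-rule identity $g''(t)=x\,D(f(x))$ in (i), the affine rescaling $t\mapsto 2t$ of the logarithmic variable in (ii), and the H\"older step on partial sums in (iii)---are precisely the standard arguments one would expect to find in the cited source. Your parenthetical reading of the second clause of (ii) as a statement about $\ln f(x^2)$ rather than $f(x^2)$ correctly identifies and repairs an evident misprint in the lemma as stated, and this corrected form is exactly what is invoked in the proof of Theorem~\ref{t3a}.
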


The second lemma as below is elementary.

\begin{lem}\label{l3b} Suppose $f$ is continuous differentiable on $[0,\infty)$. If $f'(\infty)\equiv\lim_{x\to\infty}f'(x)=-\infty$, then $f(\infty)\equiv\lim_{x\to\infty}f(x)=-\infty$.
\end{lem}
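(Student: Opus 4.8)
The plan is to reconstruct $f$ from $f'$ via the fundamental theorem of calculus and then exploit the fact that $f'$ eventually stays bounded above by a negative constant to drive $f$ to $-\infty$ at least linearly fast.

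First I would invoke the hypothesis $\lim_{x\to\infty}f'(x)=-\infty$ with the very modest threshold $1$: there is an $x_0\in[0,\infty)$ such that $f'(t)\le-1$ for all $t\ge x_0$. Next, since $f$ is continuously differentiable on $[0,\infty)$, for every $x>x_0$ we may write
$$
f(x)=f(x_0)+\int_{x_0}^{x}f'(t)\,dt\le f(x_0)-(x-x_0),
$$
and letting $x\to\infty$ in this estimate forces $f(\infty)=\lim_{x\to\infty}f(x)=-\infty$, which is exactly the assertion. Equivalently, one could apply the mean value theorem on $[x_0,x]$ to obtain $f(x)-f(x_0)=f'(\xi)(x-x_0)\le-(x-x_0)$ for some $\xi\in(x_0,x)$ and conclude in the same way.

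There is essentially no obstacle here, the lemma being elementary as advertised; the only point worth a word is that the full strength of $f'(x)\to-\infty$ is not actually needed — $\limsup_{x\to\infty}f'(x)<0$ already suffices for the same conclusion — but I would keep the hypothesis in the stated form, since that is precisely the form in which it gets verified when the lemma is later fed the blow-up of the $D$-type quantities in the logarithmic-convexity arguments of this section.
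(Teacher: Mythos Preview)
Your argument is correct and is exactly the standard elementary proof one would expect. The paper itself supplies no proof at all, merely labeling the lemma ``elementary,'' so your write-up is entirely consistent with---indeed more detailed than---the paper's treatment.
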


The main result of this section is the following log-convexity theorem.

\begin{thm}\label{t3a} Suppose $k$ is a nonnegative integer and $0<p<\infty$. 

\item{\rm(i)} If $0<\alpha<\infty$, then the function $r\mapsto\ln\mathsf{M}_{p,\alpha}(z^k,r)$ is concave in $\ln r$.

\item{\rm(ii)} If $-\infty<\alpha\le 0$, then there exists some $c$ (depending on $k$ and $\alpha$)
on $(0,\infty)$ such that the function $r\mapsto \ln\mathsf{M}_{p,\alpha}(z^k,r)$ is convex in $\ln r$ on $(0,c]$ and concave in $\ln r$ on $[c,\infty)$.
\label{4}
\end{thm}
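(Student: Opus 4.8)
The plan is to reduce everything to a single one–dimensional ratio of truncated Gaussian integrals and then to read off the answer from the sign of one explicit auxiliary function.

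\emph{Reduction.} In polar coordinates, with $x=r^{2}$ and $\beta=pk/2\ge 0$, we have $\mathsf M_{p,\alpha}(z^{k},r)=F(r^{2})$, where
$$
F(x)=\frac{\int_{0}^{x}u^{\beta}e^{-\alpha u}\,du}{\int_{0}^{x}e^{-\alpha u}\,du}.
$$
By Lemma~\ref{l3a}(ii), the asserted concavity/convexity of $\ln\mathsf M_{p,\alpha}(z^{k},\cdot)=\ln F(r^{2})$ in $\ln r$ on an interval is equivalent to that of $\ln F$ in $\ln x$ on the squared interval, so I would work with $F$. If $k=0$ (then $F\equiv1$) or $\alpha=0$ (then $F(x)=x^{\beta}/(\beta+1)$), $\ln F$ is affine in $\ln x$, hence both concave and convex, and (i), (ii) hold trivially; so assume $\beta>0$, $\alpha\ne0$. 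By Lemma~\ref{l3a}(i), everything comes down to the sign of $D(F(x))$, equivalently of $\psi'(x)$ with $\psi(x):=xF'(x)/F(x)$.

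\emph{The key simplification.} Put $A(x)=\int_{0}^{x}u^{\beta}e^{-\alpha u}\,du$, $B(x)=\int_{0}^{x}e^{-\alpha u}\,du$, $w=e^{-\alpha x}$, $P=x^{\beta}B-A$. One computes $F'=wP/B^{2}$, and, clearing the positive factor $A^{2}B^{2}/w$, finds that $\psi'(x)$ has the sign of $B^{2}x^{\beta}\bigl[(1+\beta-\alpha x)A-x^{\beta+1}w\bigr]+A^{2}\bigl[xw-(1-\alpha x)B\bigr]$. The essential point is the trio of identities $\frac{d}{dx}(x^{\beta}B-A)=\beta x^{\beta-1}B$, $\frac{d}{dx}\bigl[(1+\beta-\alpha x)A-x^{\beta+1}w\bigr]=-\alpha A$, $\frac{d}{dx}\bigl[xw-(1-\alpha x)B\bigr]=\alpha B$: since the two bracketed functions vanish at $x=0$, they equal $-\alpha\int_{0}^{x}A(u)\,du$ and $\alpha\int_{0}^{x}B(u)\,du$, so (using $A=FB$) $\psi'(x)$ has the sign of $\alpha\,G(x)$, where
$$
G(x):=F(x)^{2}\int_{0}^{x}B(u)\,du-x^{\beta}\int_{0}^{x}A(u)\,du.
$$
A Taylor expansion at the origin gives $G(x)=-\frac{\beta}{2(\beta+1)^{2}(\beta+2)}\,x^{2\beta+2}+o(x^{2\beta+2})$, so $G<0$ for all small $x>0$.

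\emph{Case (i): $\alpha>0$.} I claim $G<0$ on all of $(0,\infty)$, whence $\psi'<0$ and $\ln F$ is (strictly) concave in $\ln x$. The tool is the inequality $B(x)^{2}>2e^{-\alpha x}\int_{0}^{x}B(u)\,du$ for $x>0$, which holds because its two sides agree at $x=0$ and the difference has derivative $2\alpha e^{-\alpha x}\int_{0}^{x}B>0$. Differentiating $G$ and using $F'=w(x^{\beta}-F)/B$ gives
$$
G'(x)=F(x)\bigl(x^{\beta}-F(x)\bigr)\,\frac{2w\int_{0}^{x}B-B^{2}}{B}-\beta x^{\beta-1}\int_{0}^{x}A(u)\,du ;
$$
at a putative zero $x_{0}$ of $G$ one substitutes $\int_{0}^{x_{0}}A=F(x_{0})^{2}x_{0}^{-\beta}\int_{0}^{x_{0}}B$, and since then $2w\int_{0}^{x_{0}}B-B^{2}<0$ one checks $G'(x_{0})<0$. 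Together with $G<0$ near $0$, this prevents $G$ from ever reaching $0$, so $G<0$ throughout; Lemma~\ref{l3a}(i),(ii) completes (i).

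\emph{Case (ii): $\alpha<0$, and the main obstacle.} The same computation now gives the reversed inequality $B(x)^{2}<2e^{-\alpha x}\int_{0}^{x}B(u)\,du$, and the goal is that $G$ changes sign exactly once, from negative to positive. I would establish: (a) $G<0$ near $0$ (done above); (b) $G'(x_{0})>0$ at every zero $x_{0}$ of $G$, whence $G$ has at most one zero; (c) $G(x)>0$ for large $x$, equivalently $\psi(x)\to\beta$ as $x\to\infty$, which follows from the standard integration‑by‑parts asymptotics of $\int_{0}^{x}u^{\beta}e^{-\alpha u}\,du$ (Lemma~\ref{l3b} being convenient for passing from the divergence of a derivative to that of the function). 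Granting (a)--(c), there is a unique $c_{0}\in(0,\infty)$ with $G<0$ on $(0,c_{0})$ and $G>0$ on $(c_{0},\infty)$; since $\psi'$ has the sign of $-G$ here, $\ln F$ is convex in $\ln x$ on $(0,c_{0}]$ and concave on $[c_{0},\infty)$, and Lemma~\ref{l3a}(ii) then yields the theorem with $c=\sqrt{c_{0}}$. The hard part is (b): in case (i) the sign of $B(x)^{2}-2e^{-\alpha x}\int_{0}^{x}B$ instantly settled $G'(x_{0})$, but for $\alpha<0$ the two terms of $G'(x_{0})$ have the same, ambiguous sign, so its proof must genuinely use the relation $G(x_{0})=0$ together with precise two‑sided estimates on $A$, $B$ and on $\int_{0}^{x}A$, $\int_{0}^{x}B$; that estimate — and the second‑order asymptotics behind (c) — carry the real weight of the argument.
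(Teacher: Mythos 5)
Your reduction is sound: with $\beta=pk/2$, $A(x)=\int_0^xu^\beta e^{-\alpha u}\,du$, $B(x)=\int_0^xe^{-\alpha u}\,du$, the three differentiation identities you list are correct, and the sign of $D(F)=\psi'$ is indeed that of $\alpha G(x)$ with $G=F^2\int_0^xB-x^\beta\int_0^xA$, as is the small-$x$ expansion of $G$. Your treatment of part (i) is complete and takes a genuinely different, more economical route than the paper: the paper fixes $x$, differentiates $\Delta(\lambda,x)=D(f_\lambda)-D(f_0)$ with respect to the exponent $\lambda$, and controls the sign of $\partial\Delta/\partial\lambda$ through the chain of auxiliary functions $d_1,d_2,\delta,\delta_1$ before integrating back from $\lambda=0$, whereas you work at fixed $\lambda=\beta$ and only need the elementary inequality $B^2>2e^{-\alpha x}\int_0^xB$ together with a first-zero argument.

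The gap is in part (ii), which is where the substance of the theorem lies. Your step (b) --- that $G'(x_0)>0$ at every zero $x_0$ of $G$ when $\alpha<0$, which is precisely what would force the single sign change --- is asserted, not proved; as you yourself observe, for $\alpha<0$ the reversed inequality $B^2<2e^{-\alpha x}\int_0^xB$ makes the two terms in your formula for $G'(x_0)$ of ambiguous sign, and no substitute estimate is supplied, so the crux of (ii) is simply missing. Step (c) is also not the ``standard'' asymptotics you invoke: the first two orders of the large-$x$ expansions of $F^2\int_0^xB$ and $x^\beta\int_0^xA$ cancel, and positivity of $G$ at infinity only emerges at the next order, $G(x)\sim\beta x^{2\beta-2}e^{-\alpha x}/\alpha^{4}$; moreover the asserted equivalence of (c) with $\psi(x)\to\beta$ is not correct as stated, since the limit of $\psi$ gives no information on the sign of $\psi'$ (i.e.\ of $G$) for large $x$. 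For comparison, the paper closes exactly this single-sign-change step by a different mechanism: it shows $\partial\Delta/\partial\lambda\sim\delta$, computes $\delta_1'$ in closed form, deduces that $\delta_1$ (hence $\delta$, hence $\partial\Delta/\partial\lambda$) is positive and then negative with exactly one zero, using $\delta_1'(\infty)=-\infty$ and Lemma \ref{l3b}, and then integrates in $\lambda$. Until you actually prove your (b) (and justify (c) at the correct order), part (ii) of your argument is an outline rather than a proof.
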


\begin{proof} The case $\alpha=0$ is a straightforward by-product of the classical Hardy convexity theorem (cf. \cite{Ta}). So, for the rest of the proof we may assume $\alpha\neq0$.

By the polar coordinates and an obvious change of variables, we have
$$
\mathsf{M}_{p,\alpha}(z^k,r)=\frac{\int_0^{r^2}t^{pk/2}e^{-\alpha t}\,dt}
{\int_0^{r^2}e^{-\alpha t}\,dt}.
$$

For any nonnegative parameter $\lambda$ we define
\begin{equation*}
f_\lambda(x)=\int_0^xt^\lambda e^{-\alpha t}\,dt\quad\forall x\in (0,\infty).
\end{equation*}
To prove Theorem \ref{t3a}, by Lemma \ref{l3a} (i)-(ii), we need only to consider the function
\begin{equation*}
\Delta(\lambda,x)=\frac{f_\lambda'}{f_\lambda}+x\frac{f_\lambda''}{f_\lambda}
-x\left(\frac{f_\lambda'}{f_\lambda}\right)^2
-\left(\frac{f'_0}{f_0}+x\frac{f_0''}{f_0}-x\left(\frac{f_0'}{f_0}\right)^2\right).
\end{equation*}
Here and henceforth, the derivatives $f'_\lambda(x)$ and
$f''_\lambda(x)$ are taken with respect to $x$ not $\lambda$.

To simplify notation, we write $h=f_\lambda(x)$ and denote by $h'$, $h''$, $h'''$ to
the various derivatives of $f_\lambda(x)$ with respect to $x$. Meanwhile, $\partial/\partial\lambda$ stands for the derivative with respect to $\lambda$.

Thanks to
$h=\int_0^xt^\lambda e^{-\alpha t}\,dt$, we get
$$
\begin{cases}
h'=x^\lambda e^{-\alpha x};\\
h''=(\lambda-\alpha x)x^{\lambda-1}e^{-\alpha x};\\
h'''=x^{\lambda-2}e^{-\alpha x}\left(\lambda^2-\lambda-2\lambda\alpha x+\alpha^2
x^2\right).
\end{cases}
$$
At the same time, we have
$$
\begin{cases}
\frac{\partial h}{\partial\lambda}=\int_0^xt^\lambda e^{-\alpha t}\,\ln t\,dt;\\
\frac{\partial h'}{\partial\lambda}=\frac{\partial}{\partial x}\left(\frac{\partial h}{\partial \lambda}\right)=h'\ln x;\\
\frac{\partial h''}{\partial\lambda}=\frac{h'}{x}+h''\ln x.
\end{cases}
$$
Note that the function inside the brackets in $\Delta(\lambda,x)$ is independent of $\lambda$. So,
\begin{eqnarray*}
&&\frac{\partial\Delta}{\partial\lambda}\\
&&=\frac{1}{h^2}\left(h\frac{\partial h'}{\partial\lambda}+xh\frac{\partial h''}{\partial \lambda}-2xh'\frac{\partial h'}{\partial\lambda}\right)\\
&&\quad-\frac{1}{h^3}\frac{\partial h}{\partial \lambda}\left(hh'+xhh''-2x(h')^2\right)\\
&&=\frac{1}{h^2}\left(hh'\ln x+hh'+xhh''\ln x-2x(h')^2\ln x\right)\\
&&\quad-\frac{1}{h^3}\frac{\partial h}{\partial \lambda}\left(hh'+xhh''-2x(h')^2\right)\\
&&=\frac{h'}{h}+\frac{1}{h^3}\left(h\ln x-\frac{\partial h}{\partial\lambda}\right)(hh'+xhh''-2x(h')^2).
\end{eqnarray*}

From now on, we use the notation $\mathsf{X}\sim\mathsf{Y}$ to represent that $\mathsf{X}$ and $\mathsf{Y}$ have the same sign. Let us consider the following two functions (with $\lambda$ fixed):
$$
\begin{cases}
d_1(x)=h\ln x-\frac{\partial h}{\partial\lambda};\\
d_2(x)=\frac{hh'+xhh''-2x(h')^2}{h'}=(\lambda+1-\alpha x)h-2x^{\lambda+1}e^{-\alpha x}.
\end{cases}
$$
Since $d_1'(x)=h/x>0$, one has $d_1(x)\ge d_1(0)=0$. Now we want to prove that $d_2(x)<0$ for all $x>0$. By direct computations, we obtain
$$
\begin{cases}
d_2'(x)=-\alpha h-(\lambda+1-\alpha x)x^\lambda e^{-\alpha x};\\
d_2''(x)=(\lambda+1-\alpha x)(-\lambda+\alpha x)x^{\lambda-1} e^{-\alpha x}.
\end{cases}
$$

(i) If $\alpha>0$, then under $0<x\le\frac{\lambda+1}{\alpha}$ we have $d_2'(x)\le -\alpha h<0$. When $x>\frac{\lambda+1}{\alpha}$, it is easy to obtain
$d_2''(x)<0$, and then
$$
d_2'(x)\le d_2'\left(\frac{\lambda+1}{\alpha}\right)=-\alpha h\left(\frac{\lambda+1}{\alpha}\right)<0.
$$ 
Hence $d_2(x)<d_2(0)=0$ for all $x>0$.

(ii) If $\alpha<0$, then it is easy to see 
$d_2''(x)<0$, and hence
$d_2'(x)\le d_2'(0)=0$. This in turn implies $d_2(x)<d_2(0)=0$ for all $x>0$.

With the help of the above analysis, we deduce 
$$
\frac{\partial\Delta}{\partial\lambda}\sim -\frac{h^2h'}{hh'+xhh''-2x(h')^2}-h\ln x+\frac{\partial h}{\partial\lambda}=:\delta(x).
$$
Further computations derive
\begin{eqnarray*}\label{eq30}
&&\delta'(x)\\
&&=-\frac{2h(h')^2+h^2h''}{hh'+xhh''-2x(h')^2}\nonumber\\
&&\ +\frac{h^2h'(2hh''+xhh'''-3xh'h''-(h')^2)}{(hh'+xhh''-2x(h')^2)^2}-\frac{h}{x}\nonumber\\
&&=\left(\frac{h^2}{x(hh'+xhh''-2x(h')^2)^2}\right)\nonumber\\
&&\ \times\left(-\left((h')^2+xh'h''+2x^2(h'')^2-x^2h'h'''\right)h+x(h')^2(h'+xh'')\right)\nonumber\\
&&=\left(\frac{hh'}{hh'+xhh''-2x(h')^2}\right)^2\\
&&\ \times\left(\frac{\left((\lambda+1)^2-(2\lambda+1)\alpha x+
\alpha^2x^2\right)\delta_1(x)}{x}\right).\nonumber\\
\end{eqnarray*}
Here
$$
\delta_1(x)=-h+\frac{x^{\lambda+1}e^{-\alpha x}(\lambda+1-\alpha x)}{(\lambda+1)^2-(2\lambda+1)\alpha x+
\alpha^2x^2}.
$$ 
And, a computation implies 
\begin{eqnarray*}
\delta_1'(x)&=&\frac{-\alpha x^{\lambda+1}e^{-\alpha x}(\lambda+1+\alpha x)}{\left((\lambda+1)^2-(2\lambda+1)\alpha x+
\alpha^2x^2\right)^2}
\end{eqnarray*}
and then
$$
\delta_1'(0)=\delta_1(0)=0.
$$
With details deferred to after the proof, we also have
$\delta'(0)=0$ and when $\alpha<0$ we have $\delta'(\infty)=-\infty$. Without loss of generality, we may just handle the case $\lambda>0$ in what follows.

(i) If $\alpha>0$, then $\delta_1'(x)<0$ for
all $x\in(0,\infty)$, and hence $\delta_1(x)<\delta_1(0)=0$ on $(0,\infty)$. This implies 
$$
\frac{\partial\Delta(\lambda,x)}{\partial\lambda}<0\qquad\forall\quad x\in(0,\infty).
$$
Therefore, $\Delta(\lambda,x)\le\Delta(0,x)=0$, and the desired result follows.

(ii) If $\alpha<0$, then $\delta_1'(x)$ has only one zero $-(\lambda+1)/\alpha$
on $(0,\infty)$ and $\delta_1(x)$ is increasing on $(0,-(\lambda+1)/\alpha)$ and decreasing on $(-(\lambda+1)/\alpha,\infty)$. Noticing $\delta_1'(\infty)=-\infty,$ we use Lemma \ref{l3b} to get $\delta_1(\infty)=-\infty.$ Hence
$\delta_1(x)$ has only one zero $x^*$
on $(0,\infty)$ (Note that $x^*>\frac{\lambda+1}{-\alpha}$.) and $\delta_1(x)$ is positive on $(0,x^*)$ and negative on $(x^*,\infty)$. For $\delta(x)$ and $\frac{\partial\Delta}{\partial\lambda}$ we have similar results. Hence $\frac{\partial\Delta}{\partial\lambda}$ has exactly one zero $x_0$ (depending on $k$ and $\alpha$)
on $(0,\infty)$ (Note that $x_0>x^*>\frac{\lambda+1}{-\alpha}$.) and $\frac{\partial\Delta}{\partial\lambda}$ is positive on $(0,x_0)$ and negative on $(x_0,\infty)$. This implies  $\Delta(\lambda,x)\ge\Delta(0,x)=0$ on $(0,x_0)$ and $\Delta(\lambda,x)\le\Delta(0,x)=0$ on $(x_0,\infty)$. Now, setting $c=\sqrt{x_0}$ yields the desired result.

Finally, let us verify the above-claimed formulas:
$$
\begin{cases}
\delta'(0)=0;\\
\delta'(\infty)=-\infty.
\end{cases}
$$ 
As a matter of fact, L'Hopital's rule gives
$$
\lim_{x\to0}\frac hx=0,\quad\lim_{x\to0}\frac{xh'}{h}=\lim_{x\to0}\frac{h'+xh''}{h'}=\lambda+1.
$$
Consequently,
$$
\lim_{x\to0}\frac{hh'}{hh'+xhh''-2x(h')^2}=\lim_{x\to0}\frac{1}{\frac{h'+xh''}{h'}-2\frac{xh'}{h}}=
-(\lambda+1)^{-1}.
$$ 
It follows from the definition of $\delta_1(x)$ that
${\lim_{x\to0}\frac{\delta_1(x)}{x}}=0.$ So, by the definition of $\delta_1(x)$ we have $\delta'(0)=0$.

In a similar manner, another application of L'Hopital's rule derives
$$
\lim_{x\to\infty}\frac{h'}{h}=\lim_{x\to\infty}\frac{h''}{h'}=-\alpha
$$
and consequently,
$$
\lim_{x\to\infty}\frac{xhh'}{hh'+xhh''-2x(h')^2}=\lim_{x\to\infty}\frac{1}{\frac{1}{x}+\frac{h''}{h'}-2\frac{h'}{h}}=
\frac1{\alpha}.
$$ 
The definition of $\delta_1(x)$ and L'Hopital's rule
imply 
$$
\lim_{x\to\infty}\frac{\delta_1(x)}{x}=\lim_{x\to\infty}\delta_1'(x)=-\infty,
$$
and then $\delta'(\infty)=-\infty$.
\end{proof}

\begin{cor}\label{c3a} Suppose $\alpha<0$ and $0<p<\infty$. Then the function $r\mapsto \ln\mathsf{M}_{p,\alpha}(z^k,r)$ is convex in $\ln r$ on $(0,c]$, where $c=\sqrt{{(pk+2)}{(-2\alpha)^{-1}}}$. Moreover, for $p=2$, the function $r\mapsto \ln \mathsf{M}_{2,\alpha}(f,r)$ is convex in $\ln r$ on $(0,\sqrt{{(-\alpha)^{-1}}}]$ for any entire function $f:\mathbb C\mapsto\mathbb C$.
\end{cor}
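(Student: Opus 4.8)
The plan is to derive both parts of Corollary~\ref{c3a} from Theorem~\ref{t3a}(ii) and Lemma~\ref{l3a}(iii), the only substantive work being an interval computation and a Parseval-type expansion.

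For the first assertion, I would revisit the proof of Theorem~\ref{t3a}: with the substitution $x=r^2$ and $\lambda=pk/2$ one has $\mathsf{M}_{p,\alpha}(z^k,r)=f_\lambda(x)/f_0(x)$, and in the case $\alpha<0$ that proof shows $r\mapsto\ln\mathsf{M}_{p,\alpha}(z^k,r)$ is convex in $\ln r$ on $(0,\sqrt{x_0}\,]$, where $x_0$ is the unique zero of $x\mapsto\partial\Delta(\lambda,x)/\partial\lambda$ on $(0,\infty)$. The same proof records the chain $x_0>x^*>(\lambda+1)/(-\alpha)=(pk+2)/(-2\alpha)$, so $\sqrt{x_0}$ strictly exceeds $c:=\sqrt{(pk+2)(-2\alpha)^{-1}}$ and hence $(0,c]\subset(0,\sqrt{x_0}\,]$; convexity on $(0,c]$ follows. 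Thus nothing new is needed here beyond making explicit the inequality $x_0>(\lambda+1)/(-\alpha)$ already noted parenthetically in that proof.

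For the second assertion, first expand $f(z)=\sum_{k\ge 0}a_kz^k$ (we may assume $f\not\equiv 0$); passing to polar coordinates and applying Parseval on each circle $|z|=\rho$ together with Tonelli's theorem (all summands being nonnegative) gives
$$
\mathsf{M}_{2,\alpha}(f,r)=\sum_{k=0}^\infty|a_k|^2\,\mathsf{M}_{2,\alpha}(z^k,r)\qquad\forall\ r\in(0,\infty),
$$
a smooth positive function of $r$ on $(0,\infty)$. By the first assertion with $p=2$, each $r\mapsto\ln\mathsf{M}_{2,\alpha}(z^k,r)$ is convex in $\ln r$ on $(0,\sqrt{(k+1)(-\alpha)^{-1}}\,]$, hence on the smaller interval $(0,\sqrt{(-\alpha)^{-1}}\,]$ since $\sqrt{(k+1)(-\alpha)^{-1}}\ge\sqrt{(-\alpha)^{-1}}$ for every $k\ge 0$; adding the constant $\ln|a_k|^2$ preserves this. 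Then I would apply Lemma~\ref{l3a}(iii)---whose statement and proof are local, hence remain valid when $(0,\infty)$ is replaced by any subinterval---to the family $\{h_k=|a_k|^2\,\mathsf{M}_{2,\alpha}(z^k,\cdot):a_k\ne 0\}$, obtaining that $\ln\mathsf{M}_{2,\alpha}(f,r)=\ln\sum_k h_k(r)$ is convex in $\ln r$ on $(0,\sqrt{(-\alpha)^{-1}}\,]$, as claimed.

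The main (and rather mild) obstacle is organizational: one must read off from the proof of Theorem~\ref{t3a} the explicit lower bound $(pk+2)/(-2\alpha)$ for the concave--convex threshold rather than merely its abstract existence, and check that the intersection over all $k\ge 0$ of the convexity intervals $(0,\sqrt{(k+1)(-\alpha)^{-1}}\,]$ is exactly $(0,\sqrt{(-\alpha)^{-1}}\,]$, so that Lemma~\ref{l3a}(iii) applies on that common interval. No estimate beyond those already in Theorem~\ref{t3a} and Lemma~\ref{l3a} is required.
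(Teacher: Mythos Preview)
Your proposal is correct and follows essentially the same route as the paper: extract the explicit lower bound $x_0>(\lambda+1)/(-\alpha)$ from the proof of Theorem~\ref{t3a}(ii) with $\lambda=pk/2$ to get the first assertion, then use the Parseval expansion $\mathsf{M}_{2,\alpha}(f,r)=\sum_{k\ge0}|a_k|^2\mathsf{M}_{2,\alpha}(z^k,r)$ together with Lemma~\ref{l3a}(iii) on the common interval $(0,\sqrt{(-\alpha)^{-1}}\,]$ for the second. You have simply made explicit the steps the paper leaves to the reader.
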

\begin{proof} The first part of Corollary \ref{c3a} follows from the proof of Theorem \ref{t3a} with $\lambda=pk/2$. As for the second part, it is easy to see $c\ge\sqrt{\frac{1}{-\alpha}}$. Suppose 
$$
f(z)=\sum_{k=0}^\infty a_kz^k.
$$
It follows from an integration in polar coordinates that
$$
\mathsf{M}_{2,\alpha}(f,r)=\sum_{k=0}^\infty|a_k|^2\mathsf{M}_{2,\alpha}(z^k,r).
$$
Now, applying Lemma \ref{l3a} (iii) we obtain the desired result.
\end{proof}

\begin{rem}\label{r3a} Theorem \ref{t3a} tells us that the integral means of all monomials are logarithmically concave when $\alpha>0$. However, this is not true for all entire functions, even for linear mappings.
\end{rem}
\begin{proof} For instance, just choose $p=2,\alpha=1$ and $f(z)=a+z$. Using polar coordinates and changing variables we have
$$
\mathsf{M}_{p,\alpha}(f,r)=\frac{\int_0^{r^2}(c+t)e^{-t}\,dt}
{\int_0^{r^2} e^{-t}\,dt}\quad\hbox{where}\quad c=|a|^2.
$$
By Lemma \ref{l3a}(i), we just need to consider the function
$$
F(x)=\frac{\int_0^x(c+t)e^{-t}\,dt}
{\int_0^x e^{-t}\,dt}=\frac{c+1-(c+1+x)e^{-x}}{1-e^{-x}}\equiv\frac{g(x)}{h(x)}.
$$
Employing the $D$-notation in Lemma \ref{l3a} (i), we have
$$
\begin{cases}
D(g(x))=\frac{1}{g^2}\left((c+2x-cx-x^2)e^{-x}g(x)-x(c+x)^2e^{-2x}\right)\\
\hbox{and}\\
D(h(x))=\frac{e^{-x}}{h^2}(1-x-e^{-x}),
\end{cases}
$$
whence getting
\begin{eqnarray*}
&&D(F(x))\\
&&=D(g(x))-D(h(x))\\
&&=e^{-x} \left((c+1)(-1+3x-x^2)+(3+3c-6x-6cx-x^2)e^{-x}\right.\\
&&\ \left. +(-3-3c+3x+3cx+2x^2+cx^2+x^3)e^{-2x}+(c+1)e^{-3x}\right)\\
&&\sim (c+1)(-1+3x-x^2)e^{3x}+(3+3c-6x-6cx-x^2)e^{2x}\\
&&\ +(-3-3c+3x+3cx+2x^2+cx^2+x^3)e^{x}+(c+1)\\
&&\equiv G(x).
\end{eqnarray*}
A direct computation gives
$$
\begin{cases}
G'(x)\\
\sim(c+1)(7-3x)-\frac{14+12c+2x}{e^x}+\frac{7+5c+5x+cx+x^2}{e^{2x}}\equiv H(x);\\
H'(x)=-3(c+1)+\frac{12+12c+2x}{e^x}-\frac{9+9c+8x+2cx+2x^2}{e^{2x}};\\
H''(x)\sim-10-12c-2x+\frac{10+16c+12x+4cx+4x^2}{e^x}\equiv J(x);\\
J'(x)=-2(1-e^{-x})-(12c+4x+4cx+4x^2)e^{-x}\le 0.
\end{cases}
$$
Noticing that $J(0)=4c>0$ and $J(\infty)=-\infty$, we know that there exists a number $x_3\in (0,\infty)$ such that $J(x)$ is positive, and hence $H''(x)$ is positive on $(0,x_3)$ and negative on $(x_3,\infty)$. Note that 
$$
\begin{cases}
H'(0)=H(0)=G(0)=0;\\
H'(\infty)=-3(c+1)<0;\\
H(\infty)=G(\infty)=-\infty.
\end{cases}
$$ 
So, the functions $H'',~ H', H, G$ have similar properties. In particular, there exists a $x_0\in (0,\infty)$ such that $G(x)$, and hence, $D(F(x))$ is positive on $(0,x_0)$ and negative on $(x_0,\infty)$. This implies that $\ln\mathsf{M}_{p,\alpha}(f,r)$ is convex in $\ln r$ on $(0,\sqrt{x_0})$ and concave in $\ln r$ on $(\sqrt{x_0},\infty)$. Especially, when $c=0$, the function $\ln\mathsf{M}_{p,\alpha}(f,r)$ is concave in $\ln r$ on $(0,\infty)$.

Certainly, it is interesting to determine the maximal interval $(\lambda,\infty)$ on which $G$ is negative, that is, the area integral means $\mathsf{M}_{p,\alpha}(a+z,r)$ is logarithmically concave on $(\sqrt{\lambda},\infty)$ for any $a\in\mathbb{C}$.

It follows from the definition of $G$ that
$$
G(x)\sim G_0(x)+\frac{x^2e^x}{c+1}(1+x-e^x),
$$
where 
$$
G_0(x)=(-1+3x-x^2)e^{3x}+(3-6x)e^{2x}+(-3+3x+x^2)e^{x}+1.
$$
Note that 
$$
G(x)<0\ \ \forall\ \ c=|a|^2\Longleftrightarrow G_0(x)<0.
$$ 
Also, it is not hard to prove that $G_0(x)$ has exactly one real zero $\lambda$ in $(0,\infty)$, and $G_0(x)$ is positive on $(0,\lambda)$ and negative on $(\lambda,\infty)$. A numerical computation shows that $\lambda=1.86047095\cdots$. This implies that $\mathsf{M}_{2,1}(a+z,r)$ is logarithmically concave on $(\sqrt{\lambda},\infty)$ for any $a\in\mathbb{C}$, and the interval $(\sqrt{\lambda},\infty)$ is maximal.
\end{proof}

\end{document}